\newtheorem{defn}{Definition}[section]
\newtheorem{thm}{Theorem}[section]
\newtheorem{lem}{Lemma}[section]
\definecolor{darkred}{RGB}{150,0,0}
\definecolor{darkgreen}{RGB}{0,150,0}
\definecolor{darkblue}{RGB}{0,0,200}
\newcommand{\beq}{\begin{equation}}
\newcommand{\eeq}{\end{equation}}
\newcommand{\bea}{\begin{align}}
\newcommand{\eea}{{\end{align}}}
\newcommand{\labb}{\lambda_{\text{best}}}
\newcommand{\lam}{\lambda_{{\max}}}
\newcommand{\dlf}{\delta(\la\paf)}
\newcommand{\delf}{\delta(\text{cone}(\paf))}
\newcommand{\Gb}{\mathbf{G}}
\newcommand{\X}{\mathbf{X}}
\newcommand{\A}{\mathbf{A}}
\newcommand{\w}{\mathbf{w}}
\newcommand{\x}{\mathbf{x}}
\newcommand{\g}{\mathbf{g}}
\newcommand{\vb}{\mathbf{v}}
\newcommand{\y}{\mathbf{y}}
\newcommand{\s}{\mathbf{s}}
\newcommand{\z}{\mathbf{z}}
\newcommand{\zo}{\overline{\mathbf{z}}}
\newcommand{\ab}{\mathbf{a}}
\newcommand{\h}{\mathbf{h}}
\newcommand{\Sc}{{\mathcal{S}}}
\newcommand{\Lc}{\mathcal{L}}
\newcommand{\Nn}{\mathcal{N}}
\newcommand{\Cc}{\mathcal{C}}
\newcommand{\Pro}{{\mathbb{P}}}
\newcommand{\E}{{\mathbb{E}}}
\newcommand{\paf}{\pa f(\x_0)}
\newcommand{\la}{{\lambda}}
\newcommand{\pa}{\partial}
\newcommand{\dt}{\text{{dist}}}
\newcommand{\vs}{\vspace}
\newcommand{\nn}{\nonumber}
\begin{document}
\title{\LARGE{\bf{Simple Error Bounds for Regularized Noisy Linear Inverse Problems}}\vs{1pt}}
%
\author{Christos Thrampoulidis, Samet Oymak and Babak Hassibi\vs{2pt}\\Department of Electrical Engineering, Caltech, Pasadena -- 91125
\thanks{
Emails: {\tt(cthrampo, soymak, hassibi)@caltech.edu}. 
This work was supported in part by the National Science Foundation under grants CCF-0729203, CNS-0932428 and CIF-1018927, by the Office of Naval Research under the MURI grant N00014-08-1-0747, and by a grant from Qualcomm Inc.}
}

%

\maketitle
\begin{abstract} 
      Consider estimating a structured signal $\x_0$ from linear, underdetermined and noisy measurements $\y=\A\x_0+\z$, via solving a variant of the lasso algorithm: $\hat\x=\arg\min_\x\{ \|\y-\A\x\|_2+\la f(\x)\}$. Here, $f$ is a convex function aiming to promote the structure of $\x_0$, say $\ell_1$-norm to promote sparsity or nuclear norm to promote low-rankness. We assume that the entries of $\A$ are independent and normally distributed and make no assumptions on the noise vector $\z$, other than it being independent of $\A$. Under this generic setup, we derive a general, non-asymptotic and rather tight upper bound on the $\ell_2$-norm of the estimation error $\|\hat\x-\x_0\|_2$. Our bound is geometric in nature and obeys a simple formula; the roles of $\la$, $f$ and $\x_0$ are all captured by a single summary parameter $\delta(\la\paf)$, termed the Gaussian squared distance to the scaled subdifferential.
We connect our result to the  literature and verify its validity through simulations.
\end{abstract}

\section{Introduction}\label{sec:intro}

\subsection{Motivation}
We wish to estimate a 
structured
 signal $\x_0\in\mathbb{R}^n$ from a vector of compressed noisy linear observations $\y=\A\x_0+\z\in\mathbb{R}^m$, where $m<n$. What makes the estimation possible in the underdetermined regime, 
is the assumption that $\x_0$ retains a particular structure.
To promote this structure, we associate with it a properly chosen convex function\footnote{See \cite{Cha} for examples and a principled approach to constructing such structure inducing functions.} $f:\mathbb{R}^n\rightarrow\mathbb{R}$. As a motivating example, when $\x_0$ is a sparse vector, 
we can choose $f(\x)=\|x\|_1$. 
A typical approach for estimating $\x_0$ is via convex programming. 
If $f(\x_0)$ is known \emph{a-priori}, then the following convex program yields a reasonable estimate:
\begin{align}\label{eq:classo}
\min_\x~\| \y-\A\x\|_2\quad\text{s.t.}~f(\x)\leq f(\x_0).
\end{align}
It solves for an estimate $\hat\x$ that best fits the vector of observations $\y$ while at the same time retains structure similar to that of $\x_0$. Program \eqref{eq:classo}, with $f(\x)=\|x\|_1$, was introduced in \cite{TibLASSO} by Tibshirani  for estimating sparse signals and is known as the ``lasso" in the statistics literature. In practical situations, prior knowledge of $f(\x_0)$ is typically not available, which makes \eqref{eq:classo} impossible to solve. Instead, one can solve \emph{regularized} versions of it, like,
\begin{align}\label{eq:ell2lasso}
\min_\x~\| \y-\A\x\|_2+\frac{\la}{\sqrt{m}} f(\x),
\end{align}
or
\begin{align}\label{eq:ell22lasso}
\min_\x~\frac{1}{2}\| \y-\A\x\|_2^2+\frac{\tau}{\sqrt{m}} f(\x),
\end{align}
for  nonnegative regularizer parameters $\la$ and $\tau$.
Although very similar in nature, \eqref{eq:ell2lasso} and \eqref{eq:ell22lasso} show in general different statistical behaviors \cite{belloni,OTH}. 
 Lagrange duality ensures that there exist $\la$ and $\tau$ such that 
 they both
  become equivalent to the constrained optimization \eqref{eq:classo}. However, in practice, the challenge lies in tuning the regularizer parameters to achieve good estimation, with as low as possible prior knowledge.
Assuming that the entries of the noise vector $\z$ are i.i.d, it is well-known that a sensible choice of $\tau$ in \eqref{eq:ell22lasso} must scale with the standard deviation $\sigma$ of the noise components \cite{candes2006stable,bickel,negahban2012unified}. On the other hand, \eqref{eq:ell2lasso}  eliminates the need to know or to pre-estimate $\sigma$ \cite{belloni}. This fact was first proven by Belloni et. al. in \cite{belloni} 
\footnote{Belloni et. al \cite{belloni} refer to \eqref{eq:ell2lasso} as the ``square-root lasso" to distinguish from the ``standard lasso" estimator \eqref{eq:ell22lasso}. The authors in \cite{OTH} refer to the two estimators in \eqref{eq:ell2lasso} and \eqref{eq:ell22lasso} as the $\ell_2$-lasso and $\ell_2^2$-lasso, respectively.
 }
  for the $\ell_1$-case $f(\cdot)=\|\cdot\|_1$, and has, since then, spurred significant research interest on the analysis of \eqref{eq:ell2lasso}. 

\vs{-0.2cm}
\subsection{Contribution}
In this work, we derive a simple non-asymptotic upper bound on the normalized estimation error $\frac{\|\hat\x-\x_0\|_2}{\|\z\|_2}$ of the regularized estimator \eqref{eq:ell2lasso}, which holds for arbitrary convex regularizers $f(\cdot)$. We assume that the measurement matrix  $\A$ has independent zero-mean normal entries of variance $\frac{1}{m}$.
For the noise vector $\z$, we only require it being chosen independently of $\A$.

Our upper bound is a simple function of the number of measurements $m$
and, of a summary parameter $\delta(\la\paf)$, termed the Gaussian squared distance;  $\delta(\la\paf)$ captures  the structure induced  by $f(\cdot)$, the particular $\x_0$ we are trying to recover and the value of the regularizer parameter $\la$.
For example, when we are interested in a sparse signal, the structure is captured by $f(\cdot)$, whereas the actual sparsity level (i.e. how many entries are zero) is captured by $\x_0$. (Thus $\dlf$ is the same for all $k$-sparse $\x_0$).
 In recent works \cite{Cha,Foygel,TroppEdge,OTH}, $\dlf$ has been calculated for a number of practical regularizers $f(\cdot)$; making use of these results, translates our bound to explicit formulae.
Finally, the constants involved in our result are small and nearly accurate. 
As a byproduct, our bound  provides a guideline on the important practical problem of optimally tuning the regularizer parameter $\la$. 

\subsection{Related work}
There is a significant body of research devoted to the  performance analysis of lasso-type estimators, especially in the context of sparse recovery. A complete review of this literature is beyond the scope and the space of the current paper.   Our result complements and extends recent work in \cite{montanariLasso,StoLASSO,OTH,oymakSimple}. 
The authors in \cite{montanariLasso} obtain an explicit expression for the normalized error of \eqref{eq:ell22lasso}, in an asymptotic setting.  
\cite{StoLASSO} proposes a framework for the analysis of lasso-type algorithms and relies on it to perform a precise analysis of the (worst-case) estimation error of \eqref{eq:classo} for $f(\cdot)=\|\cdot\|_1$. A generalization of this analysis to arbitrary convex regularizers and an extension to the regularized problem \eqref{eq:ell2lasso} was provided in our work\cite{OTH}. In contrast to the present paper, the bounds in \cite{StoLASSO,OTH} require stronger assumptions, namely, an i.i.d. Gaussian noise vector $\z$ and an asymptotic setting where $m$ is large enough. Our latest work \cite{oymakSimple} successfully relaxes both those assumptions and derives sharp bounds on the estimation error of the constrained lasso \eqref{eq:classo}. The result presented in the current paper is a natural extension of that, to the more challenging, and practically important, $\ell_2$-regularized lasso problem \eqref{eq:ell2lasso}. In the context of sparse estimation, our bound recovers the order of best known result in the literature \cite{belloni}, and improves on the constants that appear in it. See Section \ref{sec:comp} for a more elaborate discussion on the relation to these works.

    \setcounter{tocdepth}{2}

\section{preliminaries}\label{sec:pre}

For the rest of the paper, let $\Nn(\mu,\sigma^2)$ denote the normal distribution of mean $\mu$ and variance $\sigma^2$. Also, to simplify notation, let us write $\|\cdot\|$ instead of $\|\cdot\|_2$.


\subsection{Convex geometry}
\subsubsection{Subdifferential}
Let $f:\mathbb{R}^n\rightarrow\mathbb{R}$ be a convex function and $\x_0\in\mathbb{R}^n$ be an arbitrary point that is \emph{not} a minimizer of $f(\cdot)$.
The subdifferential of $f(\cdot)$ at $\x_0$ is the set of vectors,
$$\paf = \left\{ \s\in\mathbb{R}^n | f(\x_0+\w) \geq f(\x_0) + \s^T\w, \forall \w\in\mathbb{R}^n  \right\}.$$
$\paf$ is a nonempty, convex and compact set \cite{Roc70}. It, also, does not contain the origin since we assumed that $\x_0$ is not a minimizer.
For any nonnegative number $\la\geq0$, we denote the scaled (by $\la$) subdifferential set as $\la\paf=\{\la\s|\s\in\paf\}$. Also, for the conic hull of the subdifferential $\paf$, we  write $\text{cone}(\paf) = \{\s | \s\in\la\paf, \text{ for some } \la\geq 0\}$.

\subsubsection{Gaussian squared distance}\label{sec:dlf}

Let $\Cc\subset\mathbb{R}^n$ be an arbitrary nonempty, convex and closed set. Denote the distance of a vector $\vb\in\mathbb{R}^n$ to $\Cc$, as 
$\dt(\vb,\Cc) = \min_{\s\in\Cc}\|\vb-\s\|$.

\begin{defn}[Gaussian distance] Let $\h\in\mathbb{R}^n$ have i.i.d $\Nn(0,1)$ entries. The Gaussian squared distance of a set $\Cc\subset\mathbb{R}^n$ is defined as $\delta(\Cc) := \mathbb{E}_h\left[ \dt^2(\h,\Cc) \right]$.
\end{defn}

Our main result in Theorem \ref{thm:main}  upper bounds 
 the estimation error of the regularized problem \eqref{eq:ell2lasso} for any value $\la\geq0$, in terms of the Gaussian squared distance of the scaled subdifferential $\delta(\la\paf)$. In a closely related work \cite{oymakSimple}, we show that the error of the constrained problem \eqref{eq:classo} admits a similar bound when $\delta(\la\paf)$ is substituted by $\delta(\text{cone}(\paf))$. In that sense, $\delta(\la\paf)$ and $\delta(\text{cone}(\paf))$ are the fundamental summary components that appear in the characterization of the performance of the lasso-type optimizations \eqref{eq:classo} and \eqref{eq:ell2lasso}. It is worth appreciating this result as an extension to the role that the same quantities play in the noiseless compressed sensing and the proximal de-noising problems. The former, recovers $\x_0$ from noiseless compressed observations $\A\x_0$, by solving $\min\|\x\|_1~\text{s.t. }\A\x=\A\x_0$. \cite{Sto,Cha} showed that $\delf$ number of measurements are sufficient to guarantee successful recovery. More recently, \cite{TroppEdge} proved that these many measurements are also necessary. In proximal de-noising, an estimate of $\x_0$ from noisy measurements $\y=\x_0+\z$, is obtained by solving $\min_{\x}\frac{1}{2}\|\y-\x\|^2+\la\sigma f(\x)$. Under the assumption of the entries of $\z$ being i.i.d $\Nn(0,\sigma^2)$, \cite{oymakProx} shows that $\frac{\mathbb{E}\|\x-\x_0\|^2}{\sigma^2}$ admits a sharp upper bound (attained when $\sigma\rightarrow 0$) equal to $\delta(\la\paf)$. 

\subsection{Gordon's Comparison Lemma and Concentration results }
In the current section, we outline the main tools that underly the proof of our result.
We begin with a very useful lemma proved by Gordon \cite{Gor} which allows a probabilistic comparison between two Gaussian processes. Here, we use a slightly modified version of the original lemma (see Lemma 5.1 in
 \cite{OTH}). 

\begin{lem}[Comparison Lemma,\cite{Gor}] \label{lem:Gor}
Let $\Gb\in\mathbb{R}^{m\times n}$, $\g\in\mathbb{R}^m$ and $\h\in\mathbb{R}^n$ have i.i.d $\Nn(0,1)$ entries and be independent of each other. Also, let $\mathcal{S}\subset\mathbb{R}^n$ an arbitrary set and $\psi:\Sc\times\mathbb{R}^m\rightarrow\mathbb{R}$ an arbitrary function. For any real $c$,
\begin{align*}
&\Pro\left( \min_{\x\in\Sc} \max_{\|\ab\|=1} \x^T\Gb\ab +\psi(\x,\ab) \geq c \right) \geq \\
&~~\quad2\cdot\Pro\left( \min_{\x\in\Sc} \max_{\|\ab\|=1} \|\x\|\g^T\ab - \h^T\x +\psi(\x,\ab)  \geq c \right) - 1.
\end{align*}
\end{lem}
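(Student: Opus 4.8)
The plan is to recognize Lemma~\ref{lem:Gor} as a consequence of Gordon's Gaussian min--max comparison inequality, a descendant of Slepian's lemma. First I would introduce two centered Gaussian processes indexed by $(\x,\ab)\in\Sc\times\{\ab:\|\ab\|=1\}$: the \emph{primary} process $X_{\x,\ab}=\x^T\Gb\ab$, and the \emph{auxiliary} process $Y_{\x,\ab}=\|\x\|\g^T\ab+\|\ab\|\h^T\x$, which on the constraint set $\{\|\ab\|=1\}$ reduces to the expression appearing in the statement (the sign in front of $\h^T\x$ is immaterial, since $\h$ and $-\h$ are identically distributed). The deterministic term $\psi(\x,\ab)$ merely shifts means and will be absorbed into the comparison thresholds at the end.

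Next I would compute the covariance kernels. Using $\E[G_{ij}G_{kl}]=\delta_{ik}\delta_{jl}$ together with the independence of $\g$ and $\h$, one finds $\E[X_{\x,\ab}X_{\x',\ab'}]=(\x^T\x')(\ab^T\ab')$ and $\E[Y_{\x,\ab}Y_{\x',\ab'}]=\|\x\|\|\x'\|(\ab^T\ab')+\|\ab\|\|\ab'\|(\x^T\x')$. The variances do not match (on $\{\|\ab\|=1\}$ the auxiliary variance is twice the primary one), so Gordon's hypotheses do not apply directly. To fix this I would draw an independent $\gamma\sim\Nn(0,1)$ and balance the primary process as $\tilde X_{\x,\ab}=\x^T\Gb\ab+\|\x\|\|\ab\|\gamma$, which makes $\E[\tilde X_{\x,\ab}^2]=2\|\x\|^2\|\ab\|^2=\E[Y_{\x,\ab}^2]$ for every $(\x,\ab)$. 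I would then verify Gordon's remaining conditions between $\tilde X$, acting as the ``larger'' process with $\x$ as the min-index, and $Y$: for a common $\x$ the two kernels coincide, so the required inequality holds with equality; for distinct $\x,\x'$ the needed inequality $\E[Y_{\x,\ab}Y_{\x',\ab'}]\le\E[\tilde X_{\x,\ab}\tilde X_{\x',\ab'}]$ rearranges to $\big(\ab^T\ab'-\|\ab\|\|\ab'\|\big)\big(\|\x\|\|\x'\|-\x^T\x'\big)\le 0$, which is true because each factor has a definite sign by Cauchy--Schwarz.

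With the hypotheses in place, I would invoke Gordon's min--max inequality in its probability form, letting the thresholds absorb $\psi$, to obtain $\Pro(\min_{\x}\max_{\|\ab\|=1}[\tilde X_{\x,\ab}+\psi]\ge c)\ge\Pro(\min_{\x}\max_{\|\ab\|=1}[Y_{\x,\ab}+\psi]\ge c)$, equivalently the reverse inequality for the lower tails. It remains to strip the balancing term and produce the factor $2$ and the $-1$. Writing $F(\Gb)=\min_{\x}\max_{\|\ab\|=1}[\x^T\Gb\ab+\psi]$, observe that on $\{\|\ab\|=1\}$ the balanced objective equals $\min_{\x}\{\|\x\|\gamma+\max_{\|\ab\|=1}[\x^T\Gb\ab+\psi]\}$, because $\|\x\|\|\ab\|\gamma=\|\x\|\gamma$ is independent of $\ab$ and passes through the inner maximum. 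For every realization with $\gamma\le 0$ this perturbation is nonpositive for all $\x$, so the balanced objective is $\le F(\Gb)$; hence $\{\gamma\le 0\}\cap\{F(\Gb)<c\}\subseteq\{\min_{\x}\max_{\|\ab\|=1}[\tilde X+\psi]<c\}$. Since $\gamma$ is independent of $\Gb$, this gives $\Pro(\min\max[\tilde X+\psi]<c)\ge\tfrac12\Pro(F(\Gb)<c)$, and combining with the tail comparison yields $\Pro(F(\Gb)<c)\le 2\,\Pro(\min\max[Y+\psi]<c)$; passing to complements delivers exactly the claimed bound.

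I expect the main obstacle to be the rigorous application of Gordon's inequality over the \emph{continuum} of indices $\Sc\times\{\|\ab\|=1\}$ for an arbitrary set $\Sc$: the clean statement is for finite index sets, so one must pass through a separability or finite-net approximation and argue measurability and attainment of the inner maximum (continuity in $\ab$ over the compact sphere helps, whereas the outer infimum over a possibly unbounded $\Sc$ needs care). By contrast, the covariance bookkeeping and the factor-$2$ symmetrization are routine once the balancing variable $\gamma$ is introduced.
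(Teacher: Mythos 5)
Your proposal is correct, and it is essentially the canonical argument: the paper itself states this lemma without proof, citing Gordon \cite{Gor} and Lemma 5.1 of \cite{OTH}, and your reconstruction — variance-balancing the bilinear process with an independent $\gamma\sim\Nn(0,1)$, verifying Gordon's covariance conditions (equality for common $\x$, the Cauchy--Schwarz sign argument for $\x\neq\x'$), and then the $\{\gamma\le 0\}$ symmetrization that produces the $2\cdot\Pro(\cdot)-1$ — is exactly the derivation behind those citations. Your covariance bookkeeping and the factor-$2$ step both check out, and flagging the continuum-index/measurability issue as the remaining technical care point is appropriate, since the lemma is stated for arbitrary $\Sc$ and $\psi$.
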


We further require a standard but powerful result \cite{ledoux} on the concentration of Lipschitz functions of Gaussian vectors. Recall that a function $\psi(\cdot):\mathbb{R}^n\rightarrow\mathbb{R}$ is L-Lipschitz, if for all $\x,\y\in\mathbb{R}^n$, $|\psi(\x)-\psi(\y)|\leq L\|\x-\y\|$.
\begin{lem}[Lipschitz concentration, \cite{ledoux}]\label{lem:Lip}
Let $\h\in\mathbb{R}^n$ have i.i.d $\Nn(0,1)$ entries and $\psi:\mathbb{R}^n\rightarrow\mathbb{R}$ be an L-Lipschitz function. 
Then, for all $t>0$, the events $\{\psi(\h) - \E[\psi(\h)]\geq t\}$ and $\{\psi(\h) - \E[\psi(\h)]\leq -t\}$ hold with probability no greater than $\exp(-t^2/(2L^2))$, each. 
\end{lem}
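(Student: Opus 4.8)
The plan is to establish the sharp sub-Gaussian tail by controlling the moment generating function (MGF) of $\psi(\h)$ and then applying a Chernoff bound. Since an $L$-Lipschitz function is differentiable almost everywhere with $\|\nabla\psi\|\le L$, I would first reduce to the case of a smooth $\psi$ satisfying $\sup_\x\|\nabla\psi(\x)\|\le L$ by mollifying $\psi$ (convolving against a narrow Gaussian kernel, which preserves the Lipschitz constant and converges uniformly to $\psi$). The stated tail bound is stable under this uniform approximation and under the resulting convergence of means, so the reduction is harmless and lets me differentiate freely below.

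The core step is to show that $\psi(\h)-\E[\psi(\h)]$ is sub-Gaussian with variance proxy $L^2$, i.e.
\begin{equation*}
\E\left[\exp\!\big(s(\psi(\h)-\E[\psi(\h)])\big)\right]\le \exp\!\left(\frac{L^2 s^2}{2}\right)\quad\text{for all }s\in\R.
\end{equation*}
To obtain the sharp constant I would invoke the Gaussian logarithmic Sobolev inequality, $\mathrm{Ent}(g^2)\le 2\,\E[\|\nabla g\|^2]$, applied with $g=\exp(s\psi/2)$. Writing $H(s)=\E[\exp(s\psi)]$ and using $\|\nabla g\|^2=\tfrac{s^2}{4}\|\nabla\psi\|^2\exp(s\psi)\le \tfrac{s^2 L^2}{4}\exp(s\psi)$, the inequality becomes the differential relation $sH'(s)-H(s)\log H(s)\le \tfrac{L^2 s^2}{2}H(s)$. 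Dividing by $s^2 H(s)$ recognizes the left-hand side as $\tfrac{d}{ds}\big[\tfrac{1}{s}\log H(s)\big]$, so this quantity grows at rate at most $L^2/2$. Integrating from $0^+$, where l'H\^opital's rule gives $\lim_{s\to0^+}\tfrac{1}{s}\log H(s)=\E[\psi(\h)]$, yields $\log H(s)\le s\,\E[\psi(\h)]+\tfrac{L^2 s^2}{2}$, which is exactly the claimed MGF bound.

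With the sub-Gaussian estimate in hand, the tail bounds follow by a routine Markov/Chernoff argument: for $t>0$ and $s>0$,
\begin{equation*}
\Pro\big(\psi(\h)-\E[\psi(\h)]\ge t\big)\le e^{-st}\,\E\big[e^{s(\psi(\h)-\E[\psi(\h)])}\big]\le \exp\!\Big(\tfrac{L^2 s^2}{2}-st\Big),
\end{equation*}
and optimizing over $s$ (at $s=t/L^2$) gives the upper-tail bound $\exp(-t^2/(2L^2))$. The lower tail is identical after replacing $\psi$ by $-\psi$, which is also $L$-Lipschitz with expectation $-\E[\psi(\h)]$, so both events have probability at most $\exp(-t^2/(2L^2))$, as stated.

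The main obstacle is obtaining the \emph{sharp}, dimension-free constant. A more elementary route --- interpolating between two independent copies $\h,\h'$ along $\h_\theta=\h\sin\theta+\h'\cos\theta$ and exploiting that $\tfrac{d}{d\theta}\h_\theta$ is independent of $\h_\theta$ --- also yields a sub-Gaussian MGF bound, but with the worse constant $\pi^2/8$ in place of $1/2$, giving only the exponent $2t^2/(\pi^2 L^2)$. Recovering the optimal $1/2$ is precisely what forces the use of the logarithmic Sobolev inequality (equivalently, the Gaussian isoperimetric inequality), whose proof is the genuinely deep ingredient; consistent with the lemma being quoted from \cite{ledoux}, I would cite this inequality rather than reprove it.
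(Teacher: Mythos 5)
The paper does not prove this lemma at all: it is quoted verbatim from the cited reference \cite{ledoux}, where it appears as a standard fact (the Gaussian concentration inequality for Lipschitz functions), so there is no internal proof to compare against. Your proposal supplies a genuine proof, and it is correct: it is the classical Herbst argument. The reduction to smooth $\psi$ by Gaussian mollification is legitimate (convolution preserves the Lipschitz constant, converges uniformly at rate proportional to the kernel width, hence the means converge, and the tail bound survives the limit); the MGF $H(s)=\E[\exp(s\psi(\h))]$ is finite for all $s$ because $|\psi(\h)|\le|\psi(\mathbf{0})|+L\|\h\|$ has Gaussian-integrable growth; the identity
\begin{equation*}
\frac{sH'(s)-H(s)\log H(s)}{s^2H(s)}=\frac{d}{ds}\left[\frac{1}{s}\log H(s)\right]
\end{equation*}
is correct, and integrating it from $0^+$ with the limit $\E[\psi(\h)]$ gives the sub-Gaussian MGF bound with the sharp variance proxy $L^2$; Chernoff and the substitution $\psi\mapsto-\psi$ then give exactly the two one-sided tails $\exp(-t^2/(2L^2))$ claimed in the lemma. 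Your closing remark is also accurate: the elementary Maurey--Pisier interpolation along $\h_\theta=\h\sin\theta+\h'\cos\theta$ only yields the exponent $2t^2/(\pi^2L^2)$, and the sharp constant genuinely requires the logarithmic Sobolev inequality (or Gaussian isoperimetry), which you correctly cite rather than reprove --- this is, in fact, one of the proofs given in the very reference \cite{ledoux} the paper points to, so your argument is a faithful self-contained substitute for the citation.
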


\section{Result}\label{sec:result}
\begin{thm}\label{thm:main}
Assume $m\geq 2$, $\z\in\mathbb{R}^m$ and $\x_0\in\mathbb{R}^n$ are arbitrary, and, $\A\in\mathbb{R}^{m\times n}$ has i.i.d $\Nn(0,\frac{1}{m})$ entries. Fix the regularizer parameter in \eqref{eq:ell2lasso} to be $\la\geq 0$ and let $\hat\x$ be a minimizer of \eqref{eq:ell2lasso}. Then, for any $0< t\leq (\sqrt{m-1}-\sqrt{\dlf})$, with probability 
$1-{5}\exp(-{t^2}/{32})$, we have,
\begin{align}\label{eq:main}
\| \hat\x - \x_0 \| \leq 2\|\z\|\frac{\sqrt{\dlf}+t}{\sqrt{m-1}-
\sqrt{\dlf}-t}.
\end{align}
\end{thm}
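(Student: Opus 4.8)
The plan is to work with the error vector $\w=\x-\x_0$ and to show that, with high probability, the objective of \eqref{eq:ell2lasso} evaluated on the sphere $\|\w\|=r$ strictly exceeds its value at $\w=0$ as soon as $r$ exceeds the claimed bound; convexity of the objective then forces the minimizer $\hat\w=\hat\x-\x_0$ to lie strictly inside that sphere (the sublevel set $\{\w:\text{obj}(\w)\le\text{obj}(0)\}$ is convex, contains $0$, and cannot reach the sphere). Substituting $\y=\A\x_0+\z$ turns the residual into $\|\A\w-\z\|$, and since $\w=0$ gives objective value $\|\z\|+\frac{\la}{\sqrt m}f(\x_0)$, it suffices to prove that
\[
\min_{\|\w\|=r}\Big\{\|\A\w-\z\|+\tfrac{\la}{\sqrt m}\big(f(\x_0+\w)-f(\x_0)\big)\Big\}>\|\z\|
\]
on a high-probability event. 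I would lower bound $f(\x_0+\w)-f(\x_0)\ge \max_{\s\in\paf}\s^T\w$ directly from the definition of the subdifferential, and rewrite the residual variationally as $\|\A\w-\z\|=\max_{\|\ab\|=1}\ab^T(\A\w-\z)$, so that the quantity to control is a min--max over $(\w,\ab)$ of the bilinear Gaussian form $\ab^T\A\w$ plus the deterministic term $\psi(\w,\ab)=-\ab^T\z+\frac{\la}{\sqrt m}\max_{\s\in\paf}\s^T\w$.

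Next I would invoke Gordon's Comparison Lemma (Lemma \ref{lem:Gor}), after writing $\A=\frac{1}{\sqrt m}\Gb$ with $\Gb$ standard Gaussian, to replace the coupled form $\ab^T\Gb\w$ by the decoupled $\|\w\|\g^T\ab-\h^T\w$, at the expense of the factor of two (and the $-1$) in the probability. On the sphere $\|\w\|=r$ the inner maximization over $\ab$ collapses to $\|\frac{r}{\sqrt m}\g-\z\|$, which is constant over the sphere, leaving the angular minimization $\min_{\|\w\|=r}\max_{\s\in\paf}(\la\s-\h)^T\w$. Bounding this from below by testing against the projection of $\h$ onto the convex compact set $\la\paf$ and applying Cauchy--Schwarz yields exactly $-r\,\dt(\h,\la\paf)$, so the reduced objective is at least $\|\frac{r}{\sqrt m}\g-\z\|-\frac{r}{\sqrt m}\dt(\h,\la\paf)$.

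Finally I would apply the Lipschitz concentration bound (Lemma \ref{lem:Lip}): the map $\h\mapsto\dt(\h,\la\paf)$ is $1$-Lipschitz and concentrates around $\sqrt{\E\,\dt^2}=\sqrt{\dlf}$, while the component of $\g$ orthogonal to $\z$, which lives in $m-1$ dimensions, concentrates around $\sqrt{m-1}$; the single scalar fluctuation coming from $\g^T\z$ can be absorbed into the deviation slack. Inserting these estimates and solving the scalar inequality $\|\frac{r}{\sqrt m}\g-\z\|\ge\|\z\|+\frac{r}{\sqrt m}\dt(\h,\la\paf)$ for the critical radius produces the closed form $2\|\z\|(\sqrt{\dlf}+t)/(\sqrt{m-1}-\sqrt{\dlf}-t)$, with the hypothesis $t\le\sqrt{m-1}-\sqrt{\dlf}$ guaranteeing a positive denominator; a union bound over the handful of concentration events together with Gordon's factor of two accounts for the probability $1-5\exp(-t^2/32)$.

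I expect the main obstacle to be the bookkeeping around Gordon's lemma rather than any single computation: ensuring the sphere-restricted min--max is genuinely in the hypothesized form, transferring the resulting high-probability lower bound back to a statement about the \emph{global} minimizer via the convex sublevel-set argument, and combining the several concentration estimates so that the constants collapse into a clean formula governed by a single deviation parameter $t$. By contrast, the angular-minimization identity and the final scalar optimization are comparatively routine once the bilinear Gaussian term has been decoupled.
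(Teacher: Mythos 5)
Your high-level route is the paper's own: pass to the error vector $\w=\x-\x_0$, lower bound the regularizer via the subgradient inequality, decouple the bilinear Gaussian form with Gordon's lemma, reduce the angular minimization to $\dt(\h,\la\paf)$, and finish with Lipschitz concentration plus a scalar inequality. (The paper minimizes over the exterior set $\{\w:\|\w\|\geq\ell(t)\}$ rather than a sphere, so it does not even need your convex-sublevel-set step, but that variation is harmless.) The genuine gap is in how you treat $\g$. You split $\g$ into its component along $\z$ and the orthogonal part $\g_\perp\in\R^{m-1}$, and assert that $\|\g_\perp\|$ ``concentrates around $\sqrt{m-1}$.'' Lemma \ref{lem:Lip} controls deviations around the \emph{mean} $\gamma_{m-1}:=\E\|\g_\perp\|$, and $\gamma_{m-1}^2$ is strictly \emph{below} $m-1$ (indeed $\gamma_{k}^2\approx k-\tfrac12$, so $\gamma_{m-1}^2\approx m-\tfrac32$). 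Thus the strongest lower-deviation event available to you is $\|\g_\perp\|\geq\gamma_{m-1}-t/4$, not $\|\g_\perp\|\geq\sqrt{m-1}-t/4$, and this order-one loss is exactly what the claimed constants cannot absorb.

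Concretely, carrying your events through the final scalar step, the inequality you must verify reduces (after squaring and cancelling) to requiring that $(\gamma_{m-1}-t/4)^2-(\sqrt{\dlf}+t/4)^2$ dominate a term of the form $\sqrt{m}\,(\sqrt{m-1}-\sqrt{\dlf}-t)$ up to the factor-of-two slack in the numerator. The theorem permits $\sqrt{\dlf}+t$ arbitrarily close to $\sqrt{m-1}$; in that regime the right-hand side tends to $0^+$ while your left-hand side tends to $\gamma_{m-1}^2-(m-1)<0$. Since the leading coefficient of your quadratic inequality in $r$ is then negative, your lower bound on the reduced objective never exceeds $\|\z\|$ for \emph{any} radius $r$, so your argument yields no bound at all, whereas the theorem still asserts a finite one. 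The paper avoids this precisely by never splitting off the parallel component: its event \eqref{eq:cond} conditions on the full norm, $\|\g\|\geq\gamma_m-t/4$ with $\gamma_m=\E\|\g\|$ satisfying $\gamma_m^2\geq\sqrt{m(m-1)}>m-1$ (the cited fact from \cite{gamma}, which is the ``$+1$'' you give away), and handles $\g^T\zo$ as a separate scalar Gaussian event $\g^T\zo\leq(t/4)\|\zo\|$. With that replacement — and with the Jensen step $\E\,\dt(\h,\la\paf)\leq\sqrt{\E\,\dt^2(\h,\la\paf)}=\sqrt{\dlf}$ stated explicitly, which you also need in order to justify ``concentrates around $\sqrt{\dlf}$'' — your outline closes and coincides with the paper's proof, including the probability accounting $2\bigl(1-\tfrac52 e^{-t^2/32}\bigr)-1=1-5e^{-t^2/32}$.
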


\vspace{5pt}
\subsection{Interpretation}\label{sec:int}
Theorem \ref{thm:main} provides a simple, general, non-asymptotic and (rather) sharp upper bound on the 
error of the regularized lasso estimator \eqref{eq:ell2lasso}, which also takes into account the specific choice of the regularizer parameter $\la\geq0$. 
In principle, the bound applies to any signal class that exhibits some sort of low-dimensionality (see \cite{oymakSimple} and references therein). 
 It is non-asymptotic and is applicable in any regime of $m$, $\lambda$ and 
   $\dlf$. Also, the constants involved in it are small 
making it rather tight\footnote{We suspect and is also supported by our simulations (e.g. Figure \ref{fig:plot}) that the factor of 2 in \eqref{eq:main} is an artifact of our proof technique and not essential.}.


The Gaussian distance term $\dlf$ summarizes the geometry of the problem and is key in \eqref{eq:main}.
In \cite{TroppEdge} (Proposition 4.4), it is proven that $\delta(\la\paf)$, when viewed as a function of $\la\geq 0$, is strictly convex, differentiable for $\la>0$
 and achieves its minimum at a unique point. Figure \ref{fig:dlf} illustrates this behavior;
 $\sqrt{m-1}-\sqrt{\dlf}$ achieves its unique  maximum value at some $\la=\labb$, it is strictly increasing for $\la<\labb$ and strictly decreasing for $\la>\labb$. 
 For the bound in \eqref{eq:main} to be at all meaningful, we require $m>\min_{\la\geq0}\dlf=\delta(\labb\paf)$. This is perfectly in line with our discussion in Section \ref{sec:dlf}, and translates to the number of measurements being large enough to at least guarantee noiseless recovery \cite{Cha,TroppEdge,Foygel,oymakProx,maleki}. 
Lemma 8.1 in \cite{OTH} proves that there exists a unique $\la_{max}$ satisfying $\la_{max}>\labb$ and $\sqrt{\delta(\la_{max}\paf)}=\sqrt{m-1}$. Similarly, when $m\leq n$, there exists unique $\la_{min}<\labb$ satisfying $\sqrt{\delta(\la_{min}\paf)}=\sqrt{m-1}$. From this, it follows that $\sqrt{m-1}>\sqrt{\dlf}$ if and only if $\la\in(\la_{min},\la_{max})$. This is exactly the range of values of the regularizer parameter $\la$ for which  \eqref{eq:main} is meaningful; see also Figure \ref{fig:dlf}.

 The region $(\la_{min},\la_{max})$, which our bound characterizes, contains $\labb$, for which, the bound in \eqref{eq:main} achieves its minimum value since it is strictly increasing in $\dlf$. Note that deriving $\labb$ does not require knowledge of any properties (e.g. variance) of the noise vector. All it requires is knowledge of the particular structure of the unknown signal. For example, in the $\ell_1$-case, $\labb$ depends only on the sparsity of $\x_0$, not $\x_0$ itself, and in the nuclear norm case, it only depends on the rank of $\x_0$,  not $\x_0$ itself.

\begin{figure}
  \begin{center}
{\includegraphics[width=9cm, height=5cm]{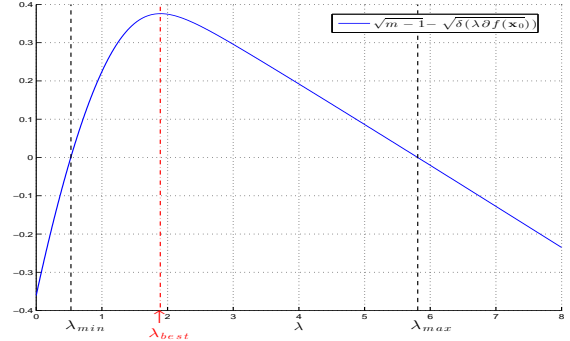}}
  \end{center}
  \vspace{-10pt}
  \caption{\footnotesize{Illustration of the denominator $\sqrt{m-1}-\sqrt{\dlf}$ in \eqref{eq:main} as a function of $\la\geq 0$. The bound is meaningful for $\la\in(\la_{min},\la_{max})$ and attains its minimum value at $\labb$. The $y$-axis is normalized by $\sqrt{n}$.
  }}
\label{fig:dlf}
\end{figure}

\subsection{Application to sparse and low-rank estimation}
Any bound on $\delta(\la\paf)$ 
translates, through Theorem \ref{thm:main}, into an upper bound on the estimation error of \eqref{eq:ell2lasso}. Such bounds have been recently derived in \cite{Cha,TroppEdge,Foygel,OTH}, for a variety of structure-inducing functions $f(\cdot)$.
For purposes of illustration and completeness, we review here those results for the celebrated cases of sparse and low-rank estimation.

\subsubsection{Sparse signals}\label{sec:sparse} Suppose $\x_0$ is a $k$-sparse signal and  $f(\cdot)=\|\cdot\|_1$. Denote by $S$ the support set of $\x_0$,
and by $S^c$ its complement. The subdifferential at $\x_0$ is\cite{Roc70},
 $$
 \paf = \{\s\in\mathbb{R}^n | \|\s\|_\infty\leq 1\text{ and } \s_i=\text{sign}((\x_0)_i),\forall i\in S \}.
 $$
 Let $h\in\mathbb{R}^n$ have i.i.d $\Nn(0,1)$ entries and define
 $$
 \text{shrink}(\chi,\la) = \begin{cases} \chi - \la & ,\chi>\la, \\  0 & ,-\la\leq\chi\leq\la, \\ \chi+\la   & ,\chi<-\la.\end{cases}
 $$
  Then, $\delta(\la\partial\|\x_0\|_1)$ is equal to (\cite{Cha,TroppEdge})
 \begin{align}
&\sum_{i\in S}\mathbb{E}[( \h_i-\la\text{sign}((\x_0)_i) )^2]+\sum_{i\in S^c}\mathbb{E}[\text{shrink}^2(\h_i,\la) ]=\notag\\
&k(1+\la^2) + (n-k)[(1+\la^2)\text{erfc}(\frac{\la}{\sqrt{2}})-\sqrt{\frac{2}{\pi}}\la\exp(-\frac{\la^2}{2})],\label{eq:closed}
 \end{align}
 where $\text{erfc}(\cdot)$ denotes the standard complementary error function. 
 Note that $\delta(\la\partial\|\x_0\|_1)$ depends only on $n,\la$ and $k=|S|$, and \emph{not} explicitly on S itself (which is not known).
 Substituting the expression in \eqref{eq:closed} in place of the $\dlf$ term in \eqref{eq:main}, yields an explicit expression for our upper bound, in terms of $n$, $m$, $k$ and $\la$. 
 A simpler upper bound which does not involve error functions is obtained in Table 3 in \cite{OTH}, and is given by  
 \begin{align}\label{eq:gr}
\delta(\la\partial\|\x_0\|_1) \leq (\la^2+3)k, \text{ when } \la\geq\sqrt{2\log(\frac{n}{k})}.
 \end{align} 
 Analogous expressions and closed-form upper bounds can be obtained when $\x_0$ is block-sparse \cite{stoBlock,OTH}.

\subsubsection{Low-rank matrices} Suppose $\X_0\in\mathbb{R}^{\sqrt{n}\times \sqrt{n}}$ is a rank-$r$ matrix and $f(\cdot)$ is the nuclear norm (sum of singular values). 
%
An upper bound to $\dlf$, analogous to \eqref{eq:gr}, is the following: $\la^2r+2\sqrt{n}(r+1)$, for $\la\geq 2n^{(1/4)}$ \cite{OTH}
.

\subsection{Comparison to related work}\label{sec:comp}
\subsubsection{Sparse estimation} Belloni et al. \cite{belloni} were the first to prove error guarantees for the $\ell_2$-lasso \eqref{eq:ell2lasso}. Their analysis shows that the estimation error is of order $O\left(\sqrt{\frac{k\log(n)}{m}}\right)$, when $m=\Omega(k \log n )$ and $\la>\sqrt{2\log(2n)}$\footnote{\cite{belloni} also imposes a ``growth restriction" on $\la$, which agrees with the fact that our bound becomes vacuous for $\la>\lam$ (see Section \ref{sec:int}).}.
Applying $\la=\sqrt{2\log(\frac{n}{k})}$ in \eqref{eq:gr} and Theorem \ref{thm:main} yields the same order-wise error guarantee. Our result is non-asymtpotic and involves explicit coefficients, while the result of \cite{belloni} is applicable to more general constructions of the measurement matrix $\A$.
%
%

\subsubsection{Comparison to the constrained lasso}
Under the same assumptions as in Theorem \ref{thm:main}, it is proven in \cite{oymakSimple} that, for any $0< t\leq \sqrt{m-1}-\sqrt{\delf}$, with probability 
$1-{6}\exp(-{t^2}/{26})$, the estimation error $\| \hat\x - \x_0 \|$ of \eqref{eq:classo} is upper bounded as follows,
\begin{align}
\| \hat\x - \x_0 \|\leq \|\z\|\frac{\sqrt{m}}{\sqrt{m-1}}\frac{\sqrt{\delf}+t}{\sqrt{m-1}-
\sqrt{\delf}-t}.\nn
\end{align}
Comparing this to \eqref{eq:main} reveals the similar nature of the two results. Apart from a factor of $2$ in \eqref{eq:main}, the upper bound on the error of the regularized lasso \eqref{eq:ell2lasso} for fixed $\la$, is essentially the same as the upper bound on the error of the constrained lasso \eqref{eq:classo}, with $\delf$ replaced by $\dlf$. Recent works \cite{TroppEdge,Foygel,oymakProx} prove that $\delf\approx\min_{\la\geq0}\dlf=\delta(\labb\paf)$. Our bound, then, suggests that setting $\la=\labb$ in \eqref{eq:ell2lasso} achieves performance almost as good as that of 
\eqref{eq:classo}. 


\subsubsection{Sharp error bounds}
\cite{OTH} performs a detailed analysis of the regularized lasso problem \eqref{eq:ell2lasso} under the additional assumption that the entries of the noise vector $\z$ are distributed $\Nn(0,\sigma^2)$. In particular, when $\sigma\rightarrow 0$ and $m$ is large enough, they prove that with high probability,
\begin{align}\label{eq:oth}
\|\hat\x-\x_0\|\approx\|\z\|\frac{\sqrt{\dlf}}{\sqrt{m-\dlf}},
\end{align}
for $\la$ belonging to a particular subset of $(\la_{min},\la_{max})$. As expected, our bound in Theorem \ref{thm:main} is larger than the term in \eqref{eq:oth}. However, apart from a factor of $2$, it only differs from the quantity in \eqref{eq:oth} in the denominator, where instead of $\sqrt{m-\dlf}$, we have the smaller $\sqrt{m-1}-\sqrt{\dlf}$. This difference becomes insignificant and indicates that our bound is  rather tight when $m$ is large.
Although the authors in \cite{OTH} conjecture that \eqref{eq:oth} upper bounds the estimation error for arbitrary values of the noise  variance $\sigma^2$, they do not prove so. In that sense, and to the best of our knowledge, Theorem \ref{thm:main} is the first rigorous upper bound on the estimation error of \eqref{eq:ell2lasso}, which holds for general convex regularizers, is non-asymptotic and requires no assumption on the distribution of $\z$.
\subsection{Simulation results}
Figure \ref{fig:plot} illustrates the bound of Theorem \ref{thm:main}, which is given in red for $n=340$, $m=140$, $k=10$ and for $\A$ having $\Nn(0,\frac{1}{m})$ entries. The upper bound from \cite{OTH}, which is asymptotic in $m$ and only applies to i.i.d Gaussian $\z$, is given in black. In our simulations, we assume $\x_0$ is a random unit norm vector over its support and consider both i.i.d $\Nn(0,\sigma^2)$, as well as, non-Gaussian noise vectors $\z$.
We have plotted the realizations of the normalized error for different values of $\la$ and $\sigma$. As noted, the bound in \cite{OTH} is occasionally violated since it requires very large $m$, as well as, i.i.d Gaussian noise.
On the other hand, the bound given in \eqref{eq:main} always holds. 

%


\begin{figure}
  \begin{center}
{\includegraphics[width=9.5cm, height=6cm]{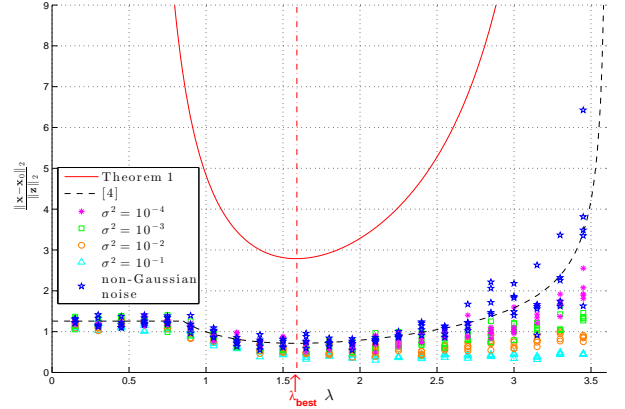}}
  \end{center}
  \vspace{-10pt}
  \caption{\footnotesize{ 
 The normalized error of \eqref{eq:ell2lasso} as a function of $\lambda$.
  }}
\label{fig:plot}
\end{figure}


\section{Proof of Theorem \ref{thm:main}}\label{sec:proof}


It is convenient to rewrite \eqref{eq:ell2lasso} in terms of the error vector $\w=\x-\x_0$ as follows:
\begin{align}\label{eq:ell2}
\min_\w \|\A\w-\z\| + \frac{\la}{\sqrt{m}}( f(\x_0+\w) - f(\x_0)).
\end{align} 
Denote the solution of \eqref{eq:ell2} by $\hat\w$. Then, $\hat\w=\hat\x-\x_0$ and \eqref{eq:main} bounds $\|\hat\w\|$. To simplify notation, for the rest of the proof, we denote the value of that upper bound  as 
\begin{align}\label{eq:ell}
\ell(t):=2\|\z\|\frac{\sqrt{\dlf}+t}{\sqrt{m-1}-\sqrt{\dlf}-t}.
\end{align}
\vs{-0.cm}
It is easy to see that the optimal value of the minimization in \eqref{eq:ell2} is no greater than $\|\z\|$. Observe that $\w=\mathbf{0}$ achieves this value. However, Lemma \ref{lem:main} below shows that if we constrain the minimization in \eqref{eq:ell2} to be only over vectors $\w$ whose norm is greater than $\ell(t)$, then the resulting optimal value is (with high probability on the measurement matrix $\A$) strictly greater than $\|\z\|$. Combining those facts yields the desired result, namely $\|\hat\w\|\leq\ell(t)$. Thus,
 it suffices to prove Lemma \ref{lem:main}.

\begin{lem}\label{lem:main}
Fix some $\la\geq 0$ and $0< t\leq (\sqrt{m-1}-\sqrt{\dlf})$. Let $\ell(t)$ be defined as in \eqref{eq:ell}. Then, with probability 
$1-5\exp(-{t^2}/{32})$, we have,
\begin{align}\label{eq:ell2cond}
\min_{\|\w\|\geq\ell(t)}\{ \|\A\w-\z\| + \frac{\la}{\sqrt{m}}( f(\x_0+\w) - f(\x_0)) \} > \|\z\|.
\end{align} 
\end{lem}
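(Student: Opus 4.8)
The plan is to prove Lemma \ref{lem:main} by a Gaussian min--max (Gordon) argument. First I would write the measurement matrix as $\A=\frac{1}{\sqrt m}\Gb$ with $\Gb$ standard Gaussian, and dualize the least-squares term, $\|\A\w-\z\|=\max_{\|\ab\|=1}\ab^T(\frac{1}{\sqrt m}\Gb\w-\z)$. The constrained objective then becomes a bilinear Gaussian min--max over $\w\in\{\|\w\|\ge\ell(t)\}$ and $\|\ab\|=1$, with the deterministic part $\psi(\w,\ab)=-\ab^T\z+\frac{\la}{\sqrt m}(f(\x_0+\w)-f(\x_0))$ playing the role of the auxiliary function in Lemma \ref{lem:Gor}. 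Applying Lemma \ref{lem:Gor} (with the scalar $\frac{1}{\sqrt m}$ carried through both Gaussian terms by homogeneity) reduces, at the cost of the factor $2$ in the probability, the task to showing that the \emph{simplified} objective exceeds $\|\z\|$ with high probability; I would in fact establish a strictly positive deterministic margin above $\|\z\|$, so that strictness transfers through Gordon by applying it at $c=\|\z\|+\gamma$.

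In the simplified problem I would first carry out the inner maximization, $\max_{\|\ab\|=1}\ab^T(\frac{\|\w\|}{\sqrt m}\g-\z)=\|\frac{\|\w\|}{\sqrt m}\g-\z\|$, leaving $\min_{\|\w\|\ge\ell(t)}\{\|\frac{\|\w\|}{\sqrt m}\g-\z\|-\frac{1}{\sqrt m}\h^T\w+\frac{\la}{\sqrt m}(f(\x_0+\w)-f(\x_0))\}$. The geometry enters through the subdifferential: by the defining inequality of $\paf$, for every $\vb\in\la\paf$ one has $\la(f(\x_0+\w)-f(\x_0))\ge\vb^T\w$, hence $-\h^T\w+\la(f(\x_0+\w)-f(\x_0))\ge-\dt(\h,\la\paf)\|\w\|$ after optimizing $\vb$ over $\la\paf$. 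This lower bound depends on $\w$ only through $\alpha:=\|\w\|$, so the simplified value is at least $\min_{\alpha\ge\ell(t)}g(\alpha)$ with $g(\alpha):=\|\frac{\alpha}{\sqrt m}\g-\z\|-\frac{\dt(\h,\la\paf)}{\sqrt m}\alpha$. Since $g$ is convex with $g(0)=\|\z\|$, a one-line convexity argument gives $g(\alpha)\ge g(\ell(t))$ for all $\alpha\ge\ell(t)$, so it suffices to verify $g(\ell(t))>\|\z\|$.

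For the final scalar inequality I would \emph{not} use the reverse triangle inequality on $\|\frac{\alpha}{\sqrt m}\g-\z\|$ (which discards exactly the curvature that makes the bound true); instead I would split $\g$ along and orthogonal to $\z$, writing $\|\frac{\alpha}{\sqrt m}\g-\z\|^2=(\frac{\alpha}{\sqrt m}g_1-\|\z\|)^2+\frac{\alpha^2}{m}\|\g_\perp\|^2$ with $g_1:=\g^T\z/\|\z\|$. Plugging in $\alpha=\ell(t)$, squaring the target $g(\ell(t))>\|\z\|$, and simplifying reduces everything to $\frac{\ell(t)}{\sqrt m}(\|\g\|^2-\dt^2(\h,\la\paf))>2\|\z\|(g_1+\dt(\h,\la\paf))$. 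I would then invoke Lemma \ref{lem:Lip} three times: $\|\g\|$ is $1$-Lipschitz with $\E\|\g\|\ge\sqrt{m-1}$ (from $\mathrm{Var}\|\g\|\le1$), $\dt(\h,\la\paf)$ is $1$-Lipschitz with $\E\dt(\h,\la\paf)\le\sqrt{\dlf}$, and $g_1\sim\Nn(0,1)$; choosing each sub-threshold proportional to $t$ and taking a union bound (combined with the Gordon factor) yields the probability $1-5\exp(-t^2/32)$. On this event the left side is at least $\frac{\ell(t)}{\sqrt m}((\sqrt{m-1}-\tfrac t4)^2-(\sqrt{\dlf}+\tfrac t4)^2)$ and the right side at most $2\|\z\|(\sqrt{\dlf}+\tfrac t2)$; substituting $\ell(t)=2\|\z\|(\sqrt{\dlf}+t)/(\sqrt{m-1}-\sqrt{\dlf}-t)$ collapses the inequality to $\sqrt{m-1}+\sqrt{\dlf}>\sqrt m$, which holds throughout the meaningful range.

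The main obstacle is this last scalar step: the true margin above $\|\z\|$ comes entirely from the quadratic term $\frac{\alpha^2}{m}\|\g_\perp\|^2$, so any bound that linearizes $\|\frac{\alpha}{\sqrt m}\g-\z\|$ loses the result by an order of magnitude. Keeping the quadratic structure forces me to control the random cross term $g_1=\g^T\z/\|\z\|$, the one genuinely noise-dependent quantity; fortunately it is a standard normal \emph{regardless} of $\z$, which is precisely why no distributional assumption on $\z$ is needed. The remaining difficulty is pure bookkeeping---choosing the three sub-thresholds so they sum to the $t$ appearing in $\ell(t)$ and produce the stated constants $5$ and $32$---and the generous factor of $2$ in $\ell(t)$ is what guarantees the collapsed inequality $\sqrt{m-1}+\sqrt{\dlf}>\sqrt m$ holds with room to spare.
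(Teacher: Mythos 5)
Your proposal follows the same architecture as the paper's proof: Gordon's comparison lemma applied to the set $\{\w:\|\w\|\geq\ell(t)\}$, reduction to a scalar problem in $\alpha=\|\w\|$, the identical three concentration events with thresholds $t/4$ (union bound $\tfrac{5}{2}\exp(-t^2/32)$, doubled through Gordon to give the stated $5$), and a final deterministic inequality. Your local variations---applying the subgradient bound after Gordon instead of before, the convexity argument reducing to $\alpha=\ell(t)$, and the explicit margin $c=\|\z\|+\gamma$ to transfer strictness---are all sound, and the last one is actually more careful than the paper.

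The gap is in the final scalar step. The collapsed inequality $\sqrt{m-1}+\sqrt{\dlf}>\sqrt{m}$ is \emph{not} implied by the hypotheses of the lemma, which place no lower bound on $\dlf$; it is equivalent to $\dlf>(\sqrt{m}-\sqrt{m-1})^2$ and fails when $\dlf$ is small. Nor is this merely an artifact of your lossy simplification: the inequality you need \emph{before} collapsing, namely
\begin{align*}
(\sqrt{\dlf}+t)\,(\sqrt{m-1}+\sqrt{\dlf})\Bigl(\sqrt{m-1}-\sqrt{\dlf}-\tfrac{t}{2}\Bigr)
>\sqrt{m}\,\Bigl(\sqrt{\dlf}+\tfrac{t}{2}\Bigr)\bigl(\sqrt{m-1}-\sqrt{\dlf}-t\bigr),
\end{align*}
itself fails for, e.g., $m=101$, $\dlf=4\times 10^{-4}$, $t=10^{-4}$ (left side $\approx 2.0100$, right side $\approx 2.0110$), a configuration permitted by the hypotheses $0<t\leq\sqrt{m-1}-\sqrt{\dlf}$. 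The root cause is that you replaced $\E\|\g\|$ by the lower bound $\sqrt{m-1}$; the paper instead keeps $\gamma_m=\E\|\g\|$ and invokes $\gamma_m^2>\sqrt{m}\sqrt{m-1}$, which exceeds $m-1$ by roughly $\tfrac{1}{2}$---precisely the slack that covers the regime of tiny $\dlf$, so the paper's version of this step needs no lower bound on $\dlf$ at all. Your proof can be repaired without changing its structure by proving the (true but unstated) bound $\dlf\geq\tfrac{1}{2}$: since $\x_0$ is not a minimizer of $f$, the compact convex set $\la\paf$ (for $\la>0$) excludes the origin, hence lies in a halfspace $\{\x:\ub^T\x\geq c\}$ with $\|\ub\|=1$ and $c>0$, so $\dt(\h,\la\paf)\geq (c-\ub^T\h)_+$ and $\dlf\geq\E\bigl[\min(\ub^T\h,0)^2\bigr]=\tfrac{1}{2}$, while $\dlf=n$ when $\la=0$; since $(\sqrt{m}-\sqrt{m-1})^2\leq(\sqrt{2}-1)^2<\tfrac{1}{2}$ for $m\geq 2$, your collapsed inequality then follows. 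Either add this argument or switch to the paper's estimate $\gamma_m^2>\sqrt{m}\sqrt{m-1}$; as written, the proof is incomplete at exactly the step you flagged as "pure bookkeeping."
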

\vs{-0.2cm}
\subsection{Proof of Lemma \ref{lem:main}}
Fix $\la$ and $t$, as in the statement of the lemma. From the convexity of $f(\cdot)$,
$f(\x_0+\w) - f(\x_0) \geq \max_{\s\in\paf} \s^T\w$.
Hence, it suffices to prove that w.h.p. over $\A$,
\begin{align*}
\min_{\|\w\|\geq\ell(t)}\{ \sqrt{m}\|\A\w-\z\| + \max_{\s\in\la\paf} \s^T\w \} > \sqrt{m}\|\z\|.
\end{align*} 
%
We begin with applying Gordon's Lemma \ref{lem:Gor} to the optimization problem in the expression above. Define, $\zo=\sqrt{m}\z$, rewrite $\|\A\w-\z\|$ as $\max_{\|\ab\|=1}\{\ab^T\A\w - \ab^T\z\}$ and, then, apply Lemma \ref{lem:Gor} with $\Gb=\sqrt{m}\A$, $\Sc = \{\w~|~\|\w\|\geq\ell(t)\}$ and $\psi(\w,\ab) = -\ab^T\zo + \max_{\s\in\la\paf} \s^T\w$. This leads to the following statement:
\begin{align*}
&\Pro\left(~ \eqref{eq:ell2cond} \text{ is true}~\right) \geq 
2\cdot\Pro\left(~ \Lc(t;\g,\h) > \|\zo\|~ \right)-1,
\end{align*}
where, $\Lc(t;\g,\h)$ is defined as 
\begin{align}\label{eq:simple}
\min_{\|\w\|\geq \ell(t)} \max_{\|\ab\|=1}\{ (\|\w\|\g-\zo)^T\ab - \min_{\s\in\la\paf} (\h-\s)^T\w \}.
\end{align} 
In the remaining, we analyze the simpler optimization problem defined in \eqref{eq:simple}, and  prove that $\Lc(t;\g,\h) > \|\zo\|$ holds with probability $1-\frac{5}{2}\exp(-t^2/32)$. We begin with simplifying the expression for $\Lc(t;\g,\h)$, as follows:
\begin{align}
&\Lc(t;\g,\h) = \min_{\|\w\|\geq \ell(t)}\{ \|\|\w\|\g-\zo\| - \min_{\s\in\la\paf} (\h-\s)^T\w \}\nn
\\
&~~~~~~~~~~~= \min_{\alpha \geq \ell(t)}\left\{ \|\alpha\g-\zo\| - \alpha\dt(\h,\la\paf) \right\}\nn
\\
&= \
\min_{\alpha\geq\ell(t)}\{\sqrt{\alpha^2\|\g\|^2 + \|\zo\|^2 - 2\alpha\g^T{\zo}} 
- \alpha\dt(\h,\la\paf)\}.
 \label{eq:p}
\end{align}
The first equality above
 follows after performing the trivial maximization over $\ab$ in \eqref {eq:simple}. 
 The second, uses the fact that $\max_{\|w\|=\alpha}\min_{\s\in\la\paf}(\h-\s)^T\w =  \min_{\s\in\la\paf}\max_{\|w\|=\alpha}(\h-\s)^T\w= \alpha\cdot \dt(\h,\la\paf)$, for all $\alpha\geq 0$. For a proof of this  see Lemma E.1 in \cite{OTH}.
 

Next, we show that $\Lc(t;\g,\h)$ is strictly greater than $\|\zo\|$ with the desired high probability over realizations of $\g$ and $\h$. 
Consider the event $\mathcal{E}_t$ of $\g$ and $\h$ satisfying all three conditions listed below,
\begin{subequations}\label{eq:cond}
\begin{align}
&1.~\|\g\| \geq \gamma_m - t/4,\label{eq:cond1}\\
&2.~\dt(\h,\la\paf)\leq\sqrt{\dlf} + t/4,\\
&3.~ \g^T\zo\leq (t/4)\|\zo\|.
\end{align}
\end{subequations}
In \eqref{eq:cond1} we have denoted $\gamma_m:=\E[\|\g\|]$; it is well known that $\gamma_m=\sqrt{2}\frac{\Gamma(\frac{m+1}{2})}{\Gamma(\frac{m}{2})}$ and $\gamma_m\leq\sqrt{m}$. 
The conditions in \eqref{eq:cond} hold with high probability. In particular, the first two hold with probability no less than $1-\exp(-t^2/32)$. This is because the $\ell_2$-norm and the distance function to a convex set are both 1-Lipschitz functions and, thus, Lemma \ref{lem:Lip} applies. The third condition holds with probability at least $1-(1/2)\exp(-t^2/32)$, since $\g^T\zo$ is statistically identical to $\Nn(0,\|\zo\|^2)$. Union bounding yields,
\begin{align}\label{eq:pro}
\Pro(\mathcal{E}_t)\geq1-(5/2)\exp(-t^2/32).
\end{align}
Furthermore, Lemma \ref{lem:det}, below, shows that if $\g$ and $\h$ are such that $\mathcal{E}_t$ is satisfied, then $\Lc(t;\g,\h)>\|\zo\|$. This, when combined with \eqref{eq:pro} shows that $\Pro(\Lc(t;\g,\h)>\|\zo\|)\geq1-(5/2)\exp(-t^2/32)$, completing the proof of Lemma \ref{lem:main}.
%
\begin{lem}\label{lem:det}
Fix any $0< t\leq (\sqrt{m-1}-\sqrt{\dlf})$. Suppose $\g$ and $\h$ are such that \eqref{eq:cond} holds
 and recall the definition of $\Lc(t;\g,\h)$ in \eqref{eq:p}. 
 Then,
$\Lc(t;\g,\h) > \|\zo\|.$
\end{lem}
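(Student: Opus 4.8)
The plan is to view $\Lc(t;\g,\h)$ in \eqref{eq:p} as the scalar minimization $\min_{\alpha\ge\ell(t)}\phi(\alpha)$, where $\phi(\alpha)=\|\alpha\g-\zo\|-\alpha\,\dt(\h,\la\paf)$, and to show that \emph{every} value $\phi(\alpha)$ with $\alpha\ge\ell(t)$ already exceeds $\|\zo\|$. Fix such an $\alpha$ (note $\alpha\ge\ell(t)>0$). The inequality $\phi(\alpha)>\|\zo\|$ is the same as $\|\alpha\g-\zo\|>\|\zo\|+\alpha\,\dt(\h,\la\paf)$; both sides are nonnegative, so squaring, cancelling $\|\zo\|^2$, and dividing by $\alpha$ turns it into the equivalent condition
\[ \alpha(\|\g\|^2-\dt^2(\h,\la\paf)) > 2\|\zo\|\,\dt(\h,\la\paf)+2\g^T\zo. \]

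Next I would insert the three bounds of \eqref{eq:cond}. From $\|\g\|\ge\gamma_m-t/4$, the standard estimate $\gamma_m\ge\sqrt{m-1}$, and the hypothesis $t\le\sqrt{m-1}-\sqrt{\dlf}$ one gets $\gamma_m-\sqrt{\dlf}\ge t>t/2$, whence $\gamma_m-t/4>\sqrt{\dlf}+t/4\ge0$ and therefore $\|\g\|^2-\dt^2(\h,\la\paf)\ge(\gamma_m-t/4)^2-(\sqrt{\dlf}+t/4)^2>0$. The crucial consequence is that the left-hand side of the displayed inequality is strictly increasing in $\alpha$ while its right-hand side is constant in $\alpha$; hence it is enough to verify it at the single smallest value $\alpha=\ell(t)$. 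On the right-hand side I use $\dt(\h,\la\paf)\le\sqrt{\dlf}+t/4$ and $\g^T\zo\le(t/4)\|\zo\|$, which together yield the clean upper bound $\|\zo\|(2\sqrt{\dlf}+t)$.

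Substituting $\alpha=\ell(t)$ from \eqref{eq:ell}, writing $\|\zo\|=\sqrt{m}\,\|\z\|$, and factoring $(\gamma_m-t/4)^2-(\sqrt{\dlf}+t/4)^2=(\gamma_m+\sqrt{\dlf})(\gamma_m-\sqrt{\dlf}-t/2)$, the entire statement reduces to the purely deterministic inequality
\[ \frac{2(\sqrt{\dlf}+t)(\gamma_m+\sqrt{\dlf})(\gamma_m-\sqrt{\dlf}-t/2)}{\sqrt{m-1}-\sqrt{\dlf}-t} > \sqrt{m}\,(2\sqrt{\dlf}+t). \]
I expect this final algebraic verification to be the main obstacle: the factor $2$ and the form of $\ell(t)$ are calibrated so that the inequality is essentially \emph{tight}, and the delicate point is the mismatch between the $\sqrt{m}$ on the right (coming from $\|\zo\|=\sqrt{m}\|\z\|$) and the $\sqrt{m-1}$ in the denominator. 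I would handle this by carrying $\gamma_m$ symbolically and invoking the two-sided bound $\sqrt{m-1}\le\gamma_m\le\sqrt{m}$ — the lower bound following from $\mathrm{Var}(\|\g\|)\le1$, since $\|\cdot\|$ is $1$-Lipschitz — rather than relaxing $\gamma_m$ prematurely, which my trial estimates suggest would lose the inequality in the small-$\sqrt{\dlf}$ regime. Finally, the boundary case $t=\sqrt{m-1}-\sqrt{\dlf}$ makes $\ell(t)$ infinite, so the feasible set is empty and $\Lc(t;\g,\h)=+\infty>\|\zo\|$ holds vacuously, while $\z=\mathbf{0}$ forces $\ell(t)=0$ and is a degenerate situation to be addressed directly; both should be flagged at the outset.
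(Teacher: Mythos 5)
Up to your final display your argument is sound and is, in substance, the paper's own proof: the same three conditions \eqref{eq:cond}, the same squaring-and-dividing-by-$\alpha$ equivalence, and the same reduction (by monotonicity in $\alpha$) to one deterministic inequality at $\alpha=\ell(t)$ --- your target inequality is exactly the paper's condition \eqref{eq:safi} evaluated at $\alpha=\ell(t)$, rearranged via $(\gamma_m+\sqrt{\dlf})(\gamma_m-\sqrt{\dlf}-t/2)=\gamma_m^2-\dlf-\tfrac{t}{2}(\gamma_m+\sqrt{\dlf})$. The genuine gap is the step you explicitly defer. That final inequality is never proved, and the toolkit you nominate for it --- carrying $\gamma_m$ with the two-sided bound $\sqrt{m-1}\le\gamma_m\le\sqrt{m}$, the lower bound coming from $\mathrm{Var}(\|\g\|)\le 1$ --- is provably inadequate. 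Since your left-hand side is increasing in $\gamma_m$ on the relevant range, that plan can succeed only if the inequality survives the substitution $\gamma_m=\sqrt{m-1}$, and it does not: with $m=100$, $\sqrt{\dlf}=0.025$, $t=10^{-4}$ (admissible), multiplying your display through by the positive factor $\sqrt{m-1}-\sqrt{\dlf}-t$ gives
\begin{align*}
2(\sqrt{\dlf}+t)\bigl(\sqrt{m-1}+\sqrt{\dlf}\bigr)\bigl(\sqrt{m-1}-\sqrt{\dlf}-\tfrac{t}{2}\bigr)&\approx 4.970,\\
\sqrt{m}\,(2\sqrt{\dlf}+t)\bigl(\sqrt{m-1}-\sqrt{\dlf}-t\bigr)&\approx 4.972,
\end{align*}
so the required strict inequality fails. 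This is not a fluke of the numbers: sending $t\to0^+$ and then $\dlf\to0^+$ in your display shows that any proof treating $\dlf$ as a free nonnegative parameter must use $\gamma_m^2\ge\sqrt{m}\sqrt{m-1}$, strictly stronger than the $\gamma_m^2\ge m-1$ your variance argument supplies; that deficit of roughly $1/2$ is exactly what kills the small-$\dlf$, small-$t$ regime.

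The paper closes this step with precisely that sharper estimate, $\gamma_m^2>\sqrt{m}\sqrt{m-1}$ (cited from \cite{gamma}), combined with $\gamma_m\le\sqrt{m}$ and $\sqrt{\dlf}<\sqrt{m}$: then
\begin{align*}
\gamma_m^2-\dlf-\tfrac{t}{2}\bigl(\gamma_m+\sqrt{\dlf}\bigr)>\sqrt{m}\sqrt{m-1}-\sqrt{m}\sqrt{\dlf}-t\sqrt{m}=\sqrt{m}\bigl(\sqrt{m-1}-\sqrt{\dlf}-t\bigr),
\end{align*}
and multiplying by $2(\sqrt{\dlf}+t)>2\sqrt{\dlf}+t$ yields your target. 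So the repair is to replace the variance bound by a Gamma-function estimate such as $\gamma_m\ge\sqrt{m-\tfrac12}$ (whence $\gamma_m^2\ge m-\tfrac12\ge\sqrt{m(m-1)}$ by AM--GM). Alternatively, your weaker bound could be salvaged by adding a lower bound on $\dlf$ itself: since $\paf$ is compact and excludes the origin, $\la\paf$ lies in a closed half-space through the origin, so $\dlf\ge\E[\min(g,0)^2]=\tfrac12$ for $g\sim\Nn(0,1)$, whereas failure of your display at $\gamma_m=\sqrt{m-1}$ requires $\sqrt{\dlf}<(\sqrt{m}+\sqrt{m-1})^{-1}$. But some such additional input is indispensable, and none appears in the proposal. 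Your flagging of the degenerate cases $t=\sqrt{m-1}-\sqrt{\dlf}$ and $\z=\mathbf{0}$ is apt; the paper itself implicitly assumes $\ell(t)$ is finite and positive.
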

\begin{proof}
Take any $\alpha\geq\ell(t)>0$. Following from \eqref{eq:cond}, we have that the objective function of the optimization in \eqref{eq:p} is lower bounded by
\begin{align}
&\phi(\alpha)=\notag\\
&{\sqrt{\alpha^2(\gamma_m-\frac{t}{4})^2 + \|\zo\|^2 - \frac{1}{2}\alpha\|\zo\|t}-\alpha(\sqrt{\dlf} + \frac{t}{4})}.\nn
\end{align}
We will show that $\phi(a)>\|\zo\|$, for all $\alpha\geq\ell(t)$, and this will complete the proof. Starting with the desired condition $\phi(\alpha)>\|\zo\|$, using the fact that $\alpha>0$ and performing some algebra, we have the following equivalences,
\begin{align}\notag
&\phi(a)>\|\zo\| \Leftrightarrow 
\alpha^2(\gamma_m-t/4)^2 + \|\zo\|^2 - (1/2)\alpha\|\zo\|t> \\
&\qquad\qquad\qquad\qquad\qquad~~~(\alpha(\sqrt{\dlf} + t/4) + \|\zo\|)^2\notag
\\&\Leftrightarrow \alpha>\frac{2\|\zo\|(\sqrt{\dlf}+t/2)}{\gamma_m^2-\dlf-\frac{t}{2}(\gamma_m+\sqrt{\dlf})}.\label{eq:safi}
\end{align}
Observing that $\gamma_m^2> \sqrt{m}\sqrt{m-1}$ \cite{gamma}, $\gamma_m\leq\sqrt{m}$ and $\sqrt{\dlf}<\sqrt{m}$,
 it can be shown that $\ell(t)$ is strictly greater than the expression in the right hand side of \eqref{eq:safi}. Thus, for all $\alpha\geq\ell(t)$, we have $\phi(\alpha)>\|\zo\|$, as desired.

%

%

\end{proof}

\section{Future directions}\label{sec:fut}
This paper adds to a recent line of work \cite{StoLASSO,OTH,oymakSimple} which characterizes the $\ell_2$-norm of the estimation error of lasso-type algorithms, when the measurement matrix has i.i.d standard normal entries. This opens many directions for future work, including but not limited to the following: a) analyzing variations of the lasso with the loss function being different than the $\ell_2$-norm of the residual, b) extending the analysis to  measurement ensembles, beyond i.i.d Gaussian.

\bibliography{compbib}

\end{document}